\begin{document}

\newcommand{\bs}{\boldsymbol}
\def \a{\alpha} \def \b{\beta} \def \d{\delta} \def \e{\varepsilon} \def \g{\gamma} \def \k{\kappa} \def \l{\lambda} \def \s{\sigma} \def \t{\theta} \def \z{\zeta}

\newcommand{\mb}{\mathbb}

\newtheorem{theorem}{Theorem}
\newtheorem{lemma}[theorem]{Lemma}
\newtheorem{claim}[theorem]{Claim}
\newtheorem{cor}[theorem]{Corollary}
\newtheorem{conj}[theorem]{Conjecture}
\newtheorem{prop}[theorem]{Proposition}
\newtheorem{definition}[theorem]{Definition}
\newtheorem{question}[theorem]{Question}
\newtheorem{example}[theorem]{Example}
\newcommand{\hh}{{{\mathrm h}}}
\newtheorem{remark}[theorem]{Remark}
\newtheorem{prob}[theorem]{Problem}

\numberwithin{equation}{section}
\numberwithin{theorem}{section}
\numberwithin{table}{section}
\numberwithin{figure}{section}

\def\sssum{\mathop{\sum\!\sum\!\sum}}
\def\ssum{\mathop{\sum\ldots \sum}}
\def\iint{\mathop{\int\ldots \int}}

\newcommand{\diam}{\operatorname{diam}}
\newcommand{\chr}{\operatorname{char}}

\def\squareforqed{\hbox{\rlap{$\sqcap$}$\sqcup$}}
\def\qed{\ifmmode\squareforqed\else{\unskip\nobreak\hfil
\penalty50\hskip1em \nobreak\hfil\squareforqed
\parfillskip=0pt\finalhyphendemerits=0\endgraf}\fi}

\newfont{\teneufm}{eufm10}
\newfont{\seveneufm}{eufm7}
\newfont{\fiveeufm}{eufm5}
%
%
\newfam\eufmfam
     \textfont\eufmfam=\teneufm
\scriptfont\eufmfam=\seveneufm
     \scriptscriptfont\eufmfam=\fiveeufm
%
%
\def\frak#1{{\fam\eufmfam\relax#1}}

\newcommand{\bflambda}{{\boldsymbol{\lambda}}}
\newcommand{\bfmu}{{\boldsymbol{\mu}}}
\newcommand{\bfxi}{{\boldsymbol{\eta}}}
\newcommand{\bfrho}{{\boldsymbol{\rho}}}

\def\eps{\varepsilon}

\def\fK{\mathfrak K}
\def\fT{\mathfrak{T}}
\def\fL{\mathfrak L}
\def\fR{\mathfrak R}
\def\fQ{\mathfrak Q}

\def\fA{{\mathfrak A}}
 \def\fB{{\mathfrak B}}
\def\fC{{\mathfrak C}}
\def\fL{{\mathfrak L}}
\def\fM{{\mathfrak M}}
\def\fS{{\mathfrak  S}}
\def\fU{{\mathfrak U}}

\def\sssum{\mathop{\sum\!\sum\!\sum}}
\def\ssum{\mathop{\sum\ldots \sum}}
\def\dsum{\mathop{\quad \sum \qquad \sum}}
\def\iint{\mathop{\int\ldots \int}}
 
\def\T {\mathsf {T}}
\def\Tor{\mathsf{T}_d}
\def\Tore{\widetilde{\mathrm{T}}_{d} }

\def\sM {\mathsf {M}}
\def\sL {\mathsf {L}}
\def\sK {\mathsf {K}}
\def\sP {\mathsf {P}}

\def\ss{\mathsf {s}}

\def \balpha{\bm{\alpha}}
\def \bbeta{\bm{\beta}}
\def \bgamma{\bm{\gamma}}
\def \bdelta{\bm{\delta}}
\def \bzeta{\bm{\zeta}}
\def \blambda{\bm{\lambda}}
\def \bchi{\bm{\chi}}
\def \bphi{\bm{\varphi}}
\def \bpsi{\bm{\psi}}
\def \bxi{\bm{\xi}}
\def \bmu{\bm{\mu}}
\def \bnu{\bm{\nu}}
\def \bomega{\bm{\omega}}

\def \bell{\bm{\ell}}

\def\eqref#1{(\ref{#1})}

\def\vec#1{\mathbf{#1}}

\newcommand{\abs}[1]{\left| #1 \right|}

\def\Zq{\mathbb{Z}_q}
\def\Zqx{\mathbb{Z}_q^*}
\def\Zd{\mathbb{Z}_d}
\def\Zdx{\mathbb{Z}_d^*}
\def\Zf{\mathbb{Z}_f}
\def\Zfx{\mathbb{Z}_f^*}
\def\Zp{\mathbb{Z}_p}
\def\Zpx{\mathbb{Z}_p^*}
\def\cM{\mathcal M}
\def\cE{\mathcal E}
\def\cH{\mathcal H}

\def\le{\leqslant}
\def\leq{\leqslant}
\def\ge{\geqslant}
\def\leq{\leqslant}

\def\sfB{\mathsf {B}}
\def\sfC{\mathsf {C}}
\def\sfS{\mathsf {S}}
\def\sfI{\mathsf {I}}
\def\sfT{\mathsf {T}}
\def\L{\mathsf {L}}
\def\FF{\mathsf {F}}

\def\sB {\mathscr{B}}
\def\sE {\mathscr{E}}
\def\sS {\mathscr{S}}

\def\cA{{\mathcal A}}
\def\cB{{\mathcal B}}
\def\cC{{\mathcal C}}
\def\cD{{\mathcal D}}
\def\cE{{\mathcal E}}
\def\cF{{\mathcal F}}
\def\cG{{\mathcal G}}
\def\cH{{\mathcal H}}
\def\cI{{\mathcal I}}
\def\cJ{{\mathcal J}}
\def\cK{{\mathcal K}}
\def\cL{{\mathcal L}}
\def\cM{{\mathcal M}}
\def\cN{{\mathcal N}}
\def\cO{{\mathcal O}}
\def\cP{{\mathcal P}}
\def\cQ{{\mathcal Q}}
\def\cR{{\mathcal R}}
\def\cS{{\mathcal S}}
\def\cT{{\mathcal T}}
\def\cU{{\mathcal U}}
\def\cV{{\mathcal V}}
\def\cW{{\mathcal W}}
\def\cX{{\mathcal X}}
\def\cY{{\mathcal Y}}
\def\cZ{{\mathcal Z}}
\newcommand{\rmod}[1]{\: \mbox{mod} \: #1}

\def\cg{{\mathcal g}}

\def\vX{\mathbf X}
\def\vY{\mathbf Y}

\def\vy{\mathbf y}
\def\vr{\mathbf r}
\def\vx{\mathbf x}
\def\va{\mathbf a}
\def\vb{\mathbf b}
\def\vc{\mathbf c}
\def\vd{\mathbf d}
\def\ve{\mathbf e}
\def\vf{\mathbf f}
\def\vg{\mathbf g}
\def\vh{\mathbf h}
\def\vk{\mathbf k}
\def\vm{\mathbf m}
\def\vz{\mathbf z}
\def\vu{\mathbf u}
\def\vv{\mathbf v}

\def\e{{\mathbf{\,e}}}
\def\ep{{\mathbf{\,e}}_p}
\def\eq{{\mathbf{\,e}}_q}
\def\er{{\mathbf{\,e}}_r}
\def\es{{\mathbf{\,e}}_s}

 \def\SS{{\mathbf{S}}}

 \def\0{{\mathbf{0}}}
 
 \newcommand{\GL}{\operatorname{GL}}
\newcommand{\SL}{\operatorname{SL}}
\newcommand{\lcm}{\operatorname{lcm}}
\newcommand{\ord}{\operatorname{ord}}
\newcommand{\Tr}{\operatorname{Tr}}
\newcommand{\Span}{\operatorname{Span}}

\def\({\left(}
\def\){\right)}
\def\l|{\left|}
\def\r|{\right|}
\def\fl#1{\left\lfloor#1\right\rfloor}
\def\rf#1{\left\lceil#1\right\rceil}
\def\sumstar#1{\mathop{\sum\vphantom|^{\!\!*}\,}_{#1}}

\def\mand{\qquad \mbox{and} \qquad}

\def\tblue#1{\begin{color}{blue}{{#1}}\end{color}}




\hyphenation{re-pub-lished}

\mathsurround=1pt

\def\bfdefault{b}

\def \F{{\mathbb F}}
\def \K{{\mathbb K}}
\def \N{{\mathbb N}}
\def \Z{{\mathbb Z}}
\def \P{{\mathbb P}}
\def \Q{{\mathbb Q}}
\def \R{{\mathbb R}}
\def \C{{\mathbb C}}
\def\Fp{\F_p}
\def \fp{\Fp^*}

\def\PER{{\mathcal{PER}}}
\def\per{{\mathrm {per}\,}}
\def\imm{{\mathrm {imm}\,}}

 \def \xbar{\overline x}
 \def \Kbar{\overline K}
  \def \Fbar{\overline \F_p}
   \def \Qbar{\overline \Q}

\title[Rational numbers in short intervals]{Rational numbers with small denominators in short intervals}

\author[I. E. Shparlinski] {Igor E. Shparlinski}
\address{School of Mathematics and Statistics, University of New South Wales, Sydney NSW 2052, Australia}
\email{igor.shparlinski@unsw.edu.au}

\begin{abstract}  
We use bounds on bilinear forms with Kloosterman fractions and 
improve the error term in the asymptotic formula of  Balazard and Martin~(2023) 
on the average value of the smallest denominators of rational numbers  in short intervals.
\end{abstract}

\subjclass[2020]{11B57, 11L07}

\keywords{Farey fractions, short intervals, Kloosterman fractions}

\maketitle

%

\section{Introduction} 

Given integer $N \ge 1$,  and $j =1, \ldots, N$, we denote by $q_j(N)$ the smallest integer $q$ such that for some $a$ we have 
$$
\frac{a}{q} \in \( \frac{j-1}{N},  \frac{j}{N}\right]. 
$$

Next, we consider the average value 
$$
S(N) = \frac{1}{N} \sum_{j=1}^N q_j(N). 
$$

Recently, Balazard and Martin~\cite{BaMa} have confirmed the conjecture 
of Kruyswijk and Meijer~\cite{KrMe}  that 
$$
S(N) \sim  \frac{16}{\pi^2} N^{3/2}
$$
and in fact established  the following much more precise asymptotic formula
 \begin{equation}
\label{eq:S asymp}
S(N) = \frac{16}{\pi^2} N^{3/2}  + O\( N^{4/3} (\log N)^2\), 
\end{equation}
see~\cite[Equation~(1)]{BaMa}. Note that the asymptotic formula~\eqref{eq:S asymp} improves on previous upper and lower bounds of Kruyswijk and Meijer~\cite{KrMe} and Stewart~\cite{Ste}, for example on the previous inequalities 
$$
1.35N^{3/2} < S(N) < 2.04 N^{3/2} 
$$
in~\cite{Ste} (note that $16/\pi^2 = 1.6211 \ldots$). 
For other related results, see~\cite{Art, BSY, ChHa, E-BLS, Mark1, Mark2} and references therein.

The bound on the error term in~\eqref{eq:S asymp} is based on the classical bound
of Kloosterman sums, see, for example,~\cite[Corollary~11.12]{IwKow}.

Here, we use bounds on bilinear forms with Kloosterman fractions due to 
Duke, Friedlander and Iwaniec~\cite{DFI}  and 
improve the error term in the asymptotic formula~\eqref{eq:S asymp} as follows.

\begin{theorem}\label{thm: S(N)}
We have
$$
S(N) = \frac{16}{\pi^2} N^{3/2}  + O\(  N^{29/22 + o(1)} \), 
$$
as $N\to  \infty$. 
\end{theorem}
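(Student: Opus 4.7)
The plan is to follow the overall strategy of Balazard and Martin~\cite{BaMa} and to sharpen the step where they invoke the Weil bound for Kloosterman sums by substituting the bilinear estimates on Kloosterman fractions from~\cite{DFI}.

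The first step is a layer-cake identity: if
\[
M(Q;N) = \#\bigl\{1 \le j \le N : q_j(N) \le Q\bigr\},
\]
then $S(N)$ can be rewritten as $N^{-1}\sum_{Q \ge 0}(N-M(Q;N))$, so it suffices to obtain a sharp asymptotic for $M(Q;N)$ in the critical regime $Q \asymp N^{1/2}$. Combinatorially $M(Q;N)$ counts intervals $((j-1)/N, j/N]$ which contain a Farey fraction of order $Q$, and thus its analysis is governed by the fine distribution of the Farey sequence $\mathcal{F}_Q$ in intervals of length $1/N$.

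Expanding the indicator of a Farey fraction via M\"obius inversion (to remove the coprimality condition on the numerator) and applying a discrete Fourier expansion in the numerator residue modulo $q$, the fluctuation of $M(Q;N)$ reduces, after dyadic decomposition, to exponential sums of the form
\[
\sum_{q \sim Q_0}\; \sum_{\substack{a \bmod q\\ (a,q)=1}} e_q(h \overline{a})\, W(h),
\]
for nonzero frequencies $h$ and smooth weights $W$. This is the point at which \cite{BaMa} apply the Weil bound termwise. Instead, I would factor $a \equiv mn \pmod q$ via a convolution identity (using a suitable hyperbola trick or multiplicative decomposition of the modulus), so that after rearrangement the sum takes the bilinear shape
\[
\sum_{m \sim M}\; \sum_{n \sim N_1} \alpha_m \beta_n\, e_q(h\,\overline{mn}),
\]
with $|\alpha_m|,|\beta_n|\le 1$ and $M, N_1$ in balanced dyadic ranges. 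To this one applies the main theorem of~\cite{DFI}, which produces a saving of a positive power of $q$ over the termwise Weil estimate.

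Finally, inserting the improved bilinear estimate into the asymptotic expansion of $M(Q;N)$, summing over dyadic scales in $q$ and in $h$, and optimising the cutoffs inherent in the setup (the main-term/tail threshold in $Q$, the M\"obius truncation, and the Fourier frequency truncation) should yield the exponent $29/22$ claimed in Theorem~\ref{thm: S(N)}. The main technical obstacle is the third step: arranging the arithmetic so that a genuinely bilinear sum with balanced variable ranges emerges from the Balazard--Martin decomposition, while separately handling the diagonal and small-$h$ contributions where the bound of~\cite{DFI} degenerates. Once this is in place and those degenerate contributions are absorbed into acceptable error, the final optimisation is a routine parameter-balancing exercise.
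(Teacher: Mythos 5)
There is a genuine gap at the step you call the main technical obstacle, and it is not just technical: the way you propose to create bilinear structure does not match the theorem of Duke--Friedlander--Iwaniec you intend to apply. The DFI bound concerns sums $\sum_{r}\sum_{s}\alpha_r\beta_s\, e\bigl(K\overline{r}/s\bigr)$ in which one of the two bilinear variables is the \emph{modulus} $s$ itself (and $\overline{r}$ is inverted modulo $s$). Your plan keeps the modulus $q$ fixed inside a dyadic block and manufactures bilinearity by factoring the residue, $a\equiv mn \pmod q$, arriving at $\sum_{m}\sum_{n}\alpha_m\beta_n\, e_q\bigl(h\,\overline{mn}\bigr)$; DFI simply does not apply to such fixed-modulus forms (bounds of that type exist, e.g.\ for prime moduli, but they are different results with different ranges of applicability, and the moduli here are arbitrary integers, so the claimed saving and the exponent $29/22$ do not follow). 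Moreover, the factorization step itself is problematic: after the Balazard--Martin reduction the numerator variable runs over an incomplete interval whose length depends on the modulus and carries a smooth weight (essentially the weight $r$), so it has no convolution structure to exploit; and the intermediate object you display, a complete sum $\sum_{a \bmod q,\ \gcd(a,q)=1} e_q(h\overline{a})$, is a Ramanujan sum and is not where the Weil bound enters their argument --- the difficulty lies in incomplete sums over the numerator.

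The point you are missing is that no extra factorization is needed: the Balazard--Martin decomposition already produces, for the dominant term, a sum of the shape $\sum_{s\ge \sqrt N}\sum_{r\le R_s,\ \gcd(r,s)=1} r\, B_1\bigl(N\overline{r}/s\bigr)$ with $R_s\ll N/s$, which after a Vaaler (Fourier) approximation of the sawtooth becomes exactly a bilinear form $\sum_{s\sim S}\sum_{r\le R_s} \alpha_r\beta_s\, e\bigl(hN\overline{r}/s\bigr)$ with the modulus $s$ as one of the bilinear variables --- precisely DFI's setting. The only genuine obstacle is that the $r$-range depends on $s$, which is handled by a standard completion argument before invoking DFI; one then plays the DFI-based bound for large $S$ against the Weil-based bound for small $S$ and optimises the Vaaler truncation (at $H\approx N^{2/11}$) to reach $N^{29/22+o(1)}$, while the remaining error terms $T_{12}$, $T_2$ of Balazard--Martin are already $O(N^{5/4}(\log N)^2)$ and can be quoted. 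As written, your proposal would stall at the bilinear step, and the final optimisation ``yielding $29/22$'' is asserted rather than derived, since the exponent depends on the precise shape of the DFI bound in the regime $R\ll N/S$, $S\gg N^{1/2}$.
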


\section{Preliminary reductions} 
\label{sec:prelim} 

As usual, we use the expressions $U \ll V$ and $U=O(V)$ to
mean $|U|\leq c V$ for some constant $c>0$ which throughout this paper
is absolute. 

 We have 
 \begin{equation}
\label{eq:S and R}
S(N) = \frac{16}{\pi^2} N^{3/2}  + R(N), 
\end{equation} 
where by~\cite[Equations~(19), (20) and~(21)]{BaMa} we can write 
 \begin{equation}
\label{eq:R and T}
R(N)  \ll T_{11}(N) +  T_{12}(N) + T_{2}(N) 
\end{equation} 
for some quantities $T_{11}(N)$, $T_{12}(N)$ and $T_{2}(N)$ which are estimated in~\cite{BaMa} separately.  In particular, by~\cite[Equations~(23) and~(26)]{BaMa} 
we have
 \begin{equation}
\label{eq:T12 and T2}
T_{12}(N)  \ll  N^{5/4} (\log N)^2 \mand T_{2}(N)    \ll  N^{5/4} (\log N)^2.
\end{equation} 
Therefore. the error term in~\eqref{eq:S asymp}  comes from the bound 
  \begin{equation}
\label{eq:T11}
T_{11}(N)  \ll  N^{4/3} (\log N)^2 
\end{equation} 
given by~\cite[Equation~(22)]{BaMa}. 
 
 We  now see from~\eqref{eq:S and R},  \eqref{eq:R and T}  and~\eqref{eq:T12 and T2}, 
 that in order to   establish Theorem~\ref{thm: S(N)}
 we only need to improve~\eqref{eq:T11} as
   \begin{equation}
\label{eq:T11-new}
T_{11}(N)  \ll  N^{29/22 + o(1)} . 
\end{equation}
 
 We first recall the following expression for  $T_{11}(N)$ given in~\cite[Section~5.3]{BaMa}:
 \begin{equation}
\label{eq:T11-B}
T_{11}(N) = \sum_{s \ge \sqrt{N} } \sum_{\substack{1 \le r \le R_s\\\gcd(r,s)=1}}
r B_1\(\frac{N r^{-1}}{s}\)
\end{equation} 
with the Bernoulli function
$$
B_1(u) = \begin{cases}
0, & \text{if}\ u \in \Z, \\
 \{u\}-1/2, & \text{if}\ u \in \Z, 
 \end{cases}
 $$
 where $\{u\}$ is the fractional part of a real $u$,  
 the inversion $ r^{-1}$ in the fractional part  $\{N r^{-1}/s\}$ is computed modulo $s$
  and $R_s$ is a certain sequence of positive integers, satisfying 
  \begin{equation}
\label{eq:Rs}
R_s \ll N/s
\end{equation} 
(we refer to~\cite{BaMa} for an exact definition, which is not important for our argument).

It is more convenient for us to work with the function 
$$
\psi(u) = \{u\}-1/2,
$$ 
which coincides with $B_1(u) $ for all $u \not \in \Z$. 

In particular,   
$$
B_1\(\frac{N r^{-1}}{s}\)= \psi\(\frac{N r^{-1}}{s}\)
$$
unless $s \mid N$. 

Using the classical bound on the divisor function
 \begin{equation}
\label{eq:Div}
\tau(k) = k^{o(1)},
\end{equation} 
for an integer positive $k \to \infty$
(see, for example,~\cite[Equation~(1.81)]{IwKow}), 
we infer from~\eqref{eq:T11-B} that  
 \begin{equation}
\label{eq:T11-psi}
T_{11}(N) =U(N) +  E(N) ,
\end{equation} 
where 
 \begin{equation}
\label{eq:UN}
U(N) = \sum_{s \ge \sqrt{N} } \sum_{\substack{1 \le r \le R_s\\\gcd(r,s)=1}}
r \psi\(\frac{N r^{-1}}{s}\), 
\end{equation} 
and, using~\eqref{eq:Rs}, 
 \begin{equation}
\label{eq:EN}
E(N) \ll \sum_{\substack{s \ge  \sqrt{N} \\s \mid N}} R_s^2 
\ll N^2   \sum_{\substack{s \ge  \sqrt{N} \\s \mid N}} s^{-2}  \le N^{1+o(1)}.
\end{equation}

\section{Vaaler polynomials} 

By a result of Vaaler~\cite{Vaal},
see also~\cite[Theorem~A.6]{GrKol} we have the following approximation to $\psi(u)$.

\begin{lemma}
\label{lem:Vaal Approx}
For any integer $H\ge 1$ there is a trigonometric 
polynomial
$$
\psi_H(u)  = \sum_{1\leq \abs{h} \leq H} \frac{a_h}{-2i\pi h} \e(hu)
$$
for coefficients $a_h\in[0,1]$ and such 
that 
 \begin{equation}
\label{eq:Vaal Ined}
\abs{\psi(u)-\psi_H(u)}
\le \frac{1}{2H+2} \sum_{\abs{h}\leq H} \(1-\frac{\abs{h}}{H+1}\) \e(hu).
\end{equation}
\end{lemma}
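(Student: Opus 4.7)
The plan is to invoke the Beurling--Selberg extremal function construction, as refined by Vaaler. First, I would introduce the entire function of exponential type $2\pi$,
$$
B(z) = \(\frac{\sin \pi z}{\pi}\)^2 \(\sum_{n\in\Z} \frac{\sign(n)}{(z-n)^2}+\frac{2}{z}\),
$$
which satisfies $B(x)\ge \sign(x)$ on the real line and $\int_{-\infty}^{\infty}(B(x)-\sign(x))\,dx = 1$; by a classical theorem of Beurling this is the extremal such majorant. From $B$ one then constructs two auxiliary entire functions $V_\pm$ of exponential type $2\pi(H+1)$, by scaling the argument by $H+1$ and combining with linear correction terms, so that $V_-(u)\le \psi(u)\le V_+(u)$ for all $u\in\R\setminus\Z$, while the Fourier transforms $\widehat{V_\pm}$ remain supported in $[-(H+1),H+1]$.

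Next, I would periodise via Poisson summation. Setting
$$
\psi_H^\pm(u) = \sum_{n\in\Z} V_\pm(u+n),
$$
the Paley--Wiener theorem together with the support condition on $\widehat{V_\pm}$ forces these sums to collapse to trigonometric polynomials of degree at most $H$, whose Fourier coefficients equal $\widehat{V_\pm}(h)$ for $1\le|h|\le H$. Since $V_\pm$ bounds $\psi$ pointwise, the same holds after summing over integer translates, so that $\psi_H^-(u)\le \psi(u)\le \psi_H^+(u)$. Taking the symmetric average $\psi_H = \tfrac12(\psi_H^++\psi_H^-)$, I obtain the pointwise estimate
$$
\abs{\psi(u)-\psi_H(u)} \le \tfrac12\bigl(\psi_H^+(u)-\psi_H^-(u)\bigr).
$$

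Finally, I would identify both sides explicitly. A direct computation of $\widehat{B}$ at integer frequencies in $[-H,H]$, combined with the symmetrisation, shows that the Fourier coefficients of $\psi_H$ at frequency $h$ take the form $-a_h/(2\pi i h)$ with $a_h\in[0,1]$, matching the claimed shape. Meanwhile, the difference $\psi_H^+-\psi_H^-$ inherits via Poisson summation from the extremal identity $\int(B-\sign)\,dx=1$ the closed form
$$
\tfrac12\bigl(\psi_H^+(u)-\psi_H^-(u)\bigr) = \frac{1}{2(H+1)}\sum_{\abs{h}\le H}\(1-\frac{\abs{h}}{H+1}\)\e(hu),
$$
which is exactly the right-hand side of~\eqref{eq:Vaal Ined} and is visibly non-negative as a Fej\'er kernel. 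The delicate step is this last identification, where one has to track the Fourier transform of $B$ through the scaling and periodisation; the remaining ingredients are routine Paley--Wiener bookkeeping and the verification that the linear correction terms cancel out at non-zero frequencies.
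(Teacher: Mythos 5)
The paper offers no proof of this lemma at all: it is quoted directly from Vaaler~\cite{Vaal} (see also~\cite[Theorem~A.6]{GrKol}), and the route you sketch — Beurling--Selberg extremal functions, periodisation, identification of the error with a Fej\'er kernel — is indeed the standard argument behind those references. As a proof, however, your sketch has two concrete gaps. First, the displayed function is not Beurling's majorant: with $\sign(0)=0$, the formula you write is Vaaler's function $H(z)$, which satisfies $\abs{H(x)-\sign(x)}\le K(x)$ with $K(z)=\bigl(\sin\pi z/(\pi z)\bigr)^2$ but does \emph{not} majorise $\sign$ (for small $x>0$ one has $H(x)$ near $0<1$); the extremal majorant is $B=H+K$. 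Second, and more seriously, the periodisation $\sum_{n}V_\pm(u+n)$ cannot converge for any real majorant $V_+$ of $\psi$ — since $\psi$ comes arbitrarily close to $1/2$ in every period, $V_+$ is bounded below by roughly $1/2$ along integer shifts, hence is not integrable, and a ``linear correction term'' makes the divergence worse. The standard repair is to majorise/minorise not $\psi$ itself but the signs in the regularised identity $\psi(u)=\lim_{K\to\infty}\bigl(u-\tfrac12\sum_{\abs{n}\le K}\sign(u-n)\bigr)$, replacing each $\sign(u-n)$ by the scaled Beurling majorant $B\bigl((H+1)(u-n)\bigr)$ or minorant $-B\bigl(-(H+1)(u-n)\bigr)$; then only the integrable difference $g=B-\sign\ge0$ is periodised, and the collapse to a trigonometric polynomial of degree $\le H$ uses $\widehat g(t)=-1/(\pi i t)$ for $\abs{t}\ge 1$ to cancel the Fourier coefficients of $\psi$ beyond degree $H$ — not a Paley--Wiener argument applied to $V_\pm$.

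Even after this repair, the two identifications you describe as routine are precisely the substance of Vaaler's theorem. The identity $\tfrac12(\psi_H^+-\psi_H^-)=\frac1{2H+2}\sum_{\abs{h}\le H}\bigl(1-\frac{\abs{h}}{H+1}\bigr)\e(hu)$ does not follow from $\int(B-\sign)\,dx=1$ alone (that only gives the $h=0$ coefficient); it requires knowing $\widehat{B-\sign}$ on all of $[-1,1]$, i.e.\ the decomposition $B-\sign=(H-\sign)+K$ together with the oddness of $H-\sign$ (so its periodised contributions cancel in the difference of majorant and minorant) and $\widehat K(t)=(1-\abs{t})_+$. Likewise, $a_h\in[0,1]$ amounts to Vaaler's explicit evaluation that the averaged polynomial has coefficients $\widehat J\bigl(h/(H+1)\bigr)$ with $J=\tfrac12H'$ and $0\le\widehat J\le1$; this is a genuine computation, not bookkeeping. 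With these points supplied your argument closes and reproduces the proof in the cited sources, but as written the construction of $\psi_H^\pm$ and the Fej\'er-kernel identification do not stand on their own.
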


\section{Bilinear forms with Kloosterman fractions} 

For an integer $q$, let  $\e(z) = \exp(2 \pi z)$. 
Here we collect some estimates on bilinear form with 
exponantials 
$ \e\(hr^{-1}/s\)$ where,  as before,  $ r^{-1}$ in the argument is computed modulo $s$. 

For $U\ge 1$ we  aslo also use $u \sim U$ to indicate $U \le u < 2U$. 

We start with recalling the following bound  of Duke, Friedlander and Iwaniec~\cite[Theorem~1]{DFI}. 

\begin{lemma}
\label{lem:DFI} For sequences $\balpha = \{\alpha_r\}_{r=1}^\infty$, $\bbeta = \{\beta_s\}_{s=1}^\infty$ of complex numbers, an nonzero integer $K $ and real positive 
$R$ and $S$ 
we have
\begin{align*}
& \abs{\sum_{s \sim S}  \sum_{\substack{r \sim R\\\gcd(r,s) = 1}} 
 \alpha_r \beta_s \e\(Kr^{-1}/s\)}\\
 &\qquad \quad   \le  \|\balpha \| \|\bbeta \|   \((R + S)^{1/2} +\(1 + \frac{K}{RS}\)^{1/2} \min\{R,S\}\) (RS)^{o(1)}, 
\end{align*}
where 
$$
 \|\balpha \| =\( \sum_{r \sim R}|\alpha_r|^2\)^{1/2} \mand  \|\bbeta \| =\( \sum_{s \sim S}|\beta_s|^2\)^{1/2}.
$$
\end{lemma}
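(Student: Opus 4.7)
The plan is to follow the spectral-theoretic approach of Duke, Friedlander and Iwaniec in~\cite{DFI}, whose central input is the Kuznetsov trace formula combined with the Deshouillers--Iwaniec large sieve inequality for Fourier coefficients of Maass forms. By the symmetry of $r$ and $s$ in the Kloosterman fraction $r^{-1}/s$ (via the reciprocity $r^{-1}/s \equiv -s^{-1}/r + 1/(rs) \pmod 1$), one reduces to the case $R \le S$, so that $\min\{R,S\} = R$. Write $W$ for the bilinear form to be estimated.

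First apply Cauchy--Schwarz in the $s$-variable and open the square to arrive at
$$
|W|^2 \le \|\bbeta\|^2 \sum_{r_1,r_2 \sim R} \alpha_{r_1}\overline{\alpha_{r_2}} \sum_{\substack{s \sim S \\ \gcd(s,r_1 r_2)=1}} \e\!\(\frac{K(r_1^{-1} - r_2^{-1})}{s}\).
$$
The diagonal contribution $r_1 = r_2$ is trivially $O(S \|\balpha\|^2)$, which upon extracting $\|\bbeta\|^2$ and taking square roots accounts for the $S^{1/2}$ part of the $(R+S)^{1/2}$ summand; the $R^{1/2}$ part is obtained symmetrically by applying the same step with Cauchy--Schwarz taken in the $r$-variable instead. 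For the off-diagonal, use the congruence $r_1^{-1} - r_2^{-1} \equiv (r_2-r_1)\overline{r_1 r_2} \pmod{s}$ to rewrite each inner sum as $\sum_s \e(n(r_1,r_2)/s)$, where $n(r_1,r_2) = K(r_2 - r_1)\overline{r_1 r_2}$ is coprime to $s$.

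Second, introduce a smooth dyadic partition of unity in $s$ and apply Poisson summation, which reshapes $\sum_s \e(n/s)\phi(s)$ into a weighted sum of Kloosterman sums $S(m,n;c)$ on a dual lattice in $m$. These Kloosterman sums are then handled by the Kuznetsov trace formula, expressing $\sum_c h(c) S(m,n;c)/c$ as a spectral average over Fourier coefficients of Maass cusp forms, Eisenstein series, and holomorphic forms. Estimating the resulting double sum over $(r_1,r_2)$ via Cauchy--Schwarz and applying the Deshouillers--Iwaniec large sieve inequality separately to each spectral family yields an off-diagonal contribution of order
$$
\|\balpha\|^2 \|\bbeta\|^2 \(1 + K/(RS)\) R^2 \cdot (RS)^{o(1)},
$$
whose square root gives the $(1 + K/(RS))^{1/2} \min\{R,S\}$ summand.

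The principal obstacle is the spectral step: one must select a Kuznetsov test function $h$ whose Bessel transforms match the Fourier profile produced by Poisson summation against the smooth cutoff $\phi$, and then apply the large-sieve input uniformly as $(r_1, r_2)$ vary, so that the spectral parameter $n(r_1,r_2)$ ranges over a long interval. The Deshouillers--Iwaniec large sieve is indispensable --- the resulting bound is strictly stronger than what the Weil estimate $|S(m,n;c)| \ll c^{1/2+o(1)}$ delivers term-by-term, and it is precisely this collective cancellation across the modulus $c$ that explains the appearance of $\min\{R,S\}$ in place of the trivial $\sqrt{RS}$.
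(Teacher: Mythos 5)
The paper itself gives no proof of this lemma: it is quoted verbatim as Theorem~1 of Duke, Friedlander and Iwaniec~\cite{DFI}, so your sketch has to be judged as a proof of that theorem, and as written it does not go through. The decisive gap is the spectral step. After your Cauchy--Schwarz in $s$, the off-diagonal inner sum is $\sum_{s \sim S,\ \gcd(s, r_1 r_2)=1} \e\bigl(K(r_2-r_1)\overline{r_1 r_2}/s\bigr)$, in which the ``numerator'' $n(r_1,r_2)=K(r_2-r_1)\overline{r_1 r_2}$ is only defined modulo $s$ and changes with $s$. It is not a fixed integer, the summand is not a periodic function of $s$ times a smooth weight, and so Poisson summation in $s$ cannot be applied in the way you describe. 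Moreover, even with a genuinely fixed numerator, a sum over the modulus of a \emph{single} exponential $\e(A\overline{B}/s)$ is not a sum of complete Kloosterman sums $S(m,n;c)$, so the Kuznetsov formula and the Deshouillers--Iwaniec large sieve have no direct purchase on it; one would first have to manufacture an extra average to complete the Kloosterman sums, and your outline does not supply this. The natural correct move at that point is reciprocity, transferring the fraction to the modulus $r_1 r_2$ --- which is in fact the mechanism Duke, Friedlander and Iwaniec exploit.

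Two further problems. First, your opening reduction to $R \le S$ ``by symmetry via reciprocity'' is not free: reciprocity replaces $\e(K\overline{r}/s)$ by $\e(-K\overline{s}/r)\,\e(K/(rs))$, and removing the archimedean factor $\e(K/(rs))$ by separation of variables is precisely where the loss $(1+|K|/(RS))^{1/2}$ arises in the argument of~\cite{DFI}; in your sketch that factor is instead attributed, without computation, to the large-sieve step, which indicates the mechanism has not been tracked. Second, the route is not the one in~\cite{DFI}: their Theorem~1 is proved by comparatively elementary means (reciprocity, separation of the smooth factor, Cauchy--Schwarz, completion and divisor-type counting), not via the spectral theory of automorphic forms, and the stronger trilinear bound of Bettin and Chandee~\cite{BettChan} is in the same spirit. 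A spectral proof may conceivably exist, but your proposal omits everything it would actually require (choice of test function in Kuznetsov, uniformity of the large sieve as the moduli $r_1 r_2$ vary, treatment of the holomorphic and exceptional spectrum), so as it stands it does not constitute a proof of the lemma.
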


Next,  given  two sequences  of complex numbers
$$
\balpha = \{\alpha_r\}_{r=1}^\infty \mand \bbeta = \{\beta_s\}_{s=1}^\infty, 
$$
a sequence of positive integers 
$$
\cR =  \{\beta_s\}_{s=1}^\infty
$$
and an integer $h$, for $S \ge 1$ we define the bilinear form
$$
\sB_K(S; \cR,  \balpha,  \bbeta) = \sum_{s \sim S}  \sum_{\substack{r =1\\\gcd(r,s) = 1}}^{R_s}
 \alpha_r \beta_s \e\(Kr^{-1}/s\).
$$
Note that in the sums $\sB_K(S; \cR,  \balpha,  \bbeta)$ the range of summation over $r$ depends on $s$
and hence Lemma~\ref{lem:DFI} does not directly apply. 

We observe that for 
 \begin{equation}
\label{eq:Special case}
\alpha_r = r, \quad \beta_s \ll 1, \quad  R_s \ll \min\{N/s, s\},   \qquad r,s =1, 2, \ldots, 
\end{equation}
the argument in~\cite[Section~3]{BaMa} (in which we also inject the bound~\eqref{eq:Div})  immediately implies that for 
$$
0 < |K| = N^{O(1)} \mand 0 < S \ll N
$$
we have
 \begin{equation}
\label{eq:Bound-Weil}
\begin{split}
\sB_K(S; \cR,  \balpha,  \bbeta) &  \ll \sum_{s \sim S} \gcd(K, s)^{1/2} R_s s^{1/2} \log s\\
& \le
N^{1+o(1)}\sum_{s \sim S} \gcd(K, s)^{1/2}   s^{-1/2} \\
& \le
N^{1+o(1)}  S^{-1/2}  \sum_{d\mid K} d^{1/2}  \sum_{\substack{s \le 2S\\ d \mid s}}   1   \\
& \le N^{1+o(1)}  S^{-1/2}  \sum_{d\mid K} d^{1/2} \fl{2S/d} \\
& \le N^{1+o(1)}  S^{1/2}  \sum_{d\mid K} d^{-1/2}  \\
& \le   N^{1+o(1)}  S^{1/2}.
\end{split}
\end{equation}
Note that one can also derive~\eqref{eq:Bound-Weil}   via~\cite[Lemma~8]{DFI} and partial summation. 

In fact using the bound~\eqref{eq:Bound-Weil} for $S \le N^{2/3}$ and the trivial bound  
$$
\sB_K(S; \cR,  \balpha,  \bbeta)  \ll \sum_{s \sim S} 
R_s^2 \ll N^2S^{-1}
$$
in our argument below, one  recovers  the asymptotic formula~\eqref{eq:S asymp}.
However using some other bounds we achieve a stronger result. 

We also remark that for us only the choice of 
$\balpha = \{\alpha_r\}_{r=1}^\infty$ satisfying~\eqref{eq:Special case}
matter. However we present the below results for a more general $\balpha$
(but still they admit even more general forms).

Using Lemma~\ref{lem:DFI} together with  the standard completing technique, see, for example,~\cite[Section~12.2]{IwKow}, we derive our main technical tool.

\begin{lemma}
\label{lem:DFI-Gen} For sequences $\balpha = \{\alpha_r\}_{r=1}^\infty$, $\bbeta = \{\beta_s\}_{s=1}^\infty$ and  $\cR =  \{R_s\}_{s=1}^\infty$, an  nonzero integer $K$ and real $S$ with 
$$
\alpha_r  \ll A, \quad \beta_s \ll B, \quad  R_s \ll \min\{N/s, s\},   \qquad r,s =1, 2, \ldots, 
$$
and 
$$
 N^{1/2} \ll  S \ll N, 
$$
we have 
$$
\abs{\sB_K(S; \cR,  \balpha,  \bbeta)} \le AB (RS)^{1/2} \(S^{1/2} +  R+ K^{1/2} S^{-1/2} R^{1/2}\)N^{o(1)}, 
$$
where 
$$
R =  \max\{R_s: ~ s\sim S\}. 
$$
\end{lemma}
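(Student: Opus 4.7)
The plan is to reduce to Lemma~\ref{lem:DFI} by detaching the $s$-dependent upper limit $R_s$ on $r$ via a finite Fourier expansion on $\Z/M\Z$. The only genuine technical point is the bookkeeping of this completion step; once done, Lemma~\ref{lem:DFI} applies directly to a dyadic decomposition of $r$.

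Concretely, I would choose an integer $M$ with $R \le M \le 2R$ and, for each $r \in \{1,\ldots,M\}$ and each $s$, use the standard orthogonality identity
\[
\mathbf{1}_{1\le r\le R_s} = \sum_{|h|\le M/2} c_{h,s}\,\e(hr/M), \qquad c_{h,s}=\frac{1}{M}\sum_{m=1}^{R_s}\e(-hm/M),
\]
valid since $R_s\le R\le M$, and note the $s$-uniform bound $|c_{h,s}|\ll\min\{1,1/(|h|+1)\}$ from geometric summation. Inserting this into $\sB_K(S;\cR,\balpha,\bbeta)$, extending $\alpha_r$ by zero beyond $R$, and swapping the summations yields
\[
\sB_K(S;\cR,\balpha,\bbeta) = \sum_{|h|\le M/2}\,\sum_{s\sim S}(c_{h,s}\beta_s) \sum_{\substack{1\le r\le M\\\gcd(r,s)=1}}(\alpha_r\,\e(hr/M))\,\e(Kr^{-1}/s),
\]
so that each inner form is of the shape required by Lemma~\ref{lem:DFI}, with modified weights bounded by $A$ and $B\min\{1,1/(|h|+1)\}$ respectively. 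After a dyadic decomposition $r\sim R'$ with $R'$ a power of $2$ at most $R$, Lemma~\ref{lem:DFI} is applied block by block, and the block with $R'\asymp R$ dominates.

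The hypothesis $S\gg N^{1/2}$ together with $R_s \ll \min\{N/s, s\}$ gives $R \ll \min\{N/S, S\} \ll S$, so the structural factor coming out of Lemma~\ref{lem:DFI} collapses via
\[
(R+S)^{1/2}+\bigl(1+K/(RS)\bigr)^{1/2}\min\{R,S\} \ll S^{1/2}+R+K^{1/2}S^{-1/2}R^{1/2},
\]
which is exactly the factor appearing in the claim. Summing over frequencies contributes $\sum_{|h|\le M/2}\min\{1,1/(|h|+1)\}\ll \log M$, and the dyadic summation over $R'$ costs another logarithm; both are absorbed into $N^{o(1)}$. The main obstacle is precisely the bookkeeping just described: ensuring that the $s$-dependence of $c_{h,s}$ is harmless on the $\bbeta$-side — which it is, thanks to the $s$-uniform bound that lets one estimate $\|(c_{h,s}\beta_s)_{s\sim S}\|\ll B\min\{1,1/(|h|+1)\}S^{1/2}$ — and that the total loss from the completion and dyadic decomposition is only logarithmic.
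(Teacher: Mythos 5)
Your proposal is correct and follows essentially the same route as the paper: both complete the $s$-dependent range $1\le r\le R_s$ by a finite Fourier (orthogonality) expansion, apply Lemma~\ref{lem:DFI} to the resulting complete bilinear forms, sum the completion frequencies at a logarithmic cost, and simplify via $R\ll N/S\ll S$ to reach the stated factor $S^{1/2}+R+K^{1/2}S^{-1/2}R^{1/2}$. Your bookkeeping of attaching the $s$-dependent coefficients $c_{h,s}$ to the $\bbeta$-side (with the $s$-uniform bound) is a perfectly fine, if anything slightly more careful, variant of the paper's treatment of the completed geometric sum.
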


\begin{proof}  
Note that 
 \begin{equation}
\label{eq:R N S}
R \ll N/S \ll S.
\end{equation}

Using the orthogonality of exponential functions, we write 
\begin{align*}
\sB_K(S&; \cR,  \balpha,  \bbeta) \\ & =  \sum_{s \sim S}  \sum_{\substack{r =1\\\gcd(r,s) = 1}}^{R_s}
 \alpha_r \beta_s  \e\(Kr^{-1}/s\)\\
  & =   \sum_{s \sim S}  \sum_{\substack{r =1\\\gcd(r,s) = 1}}^{R}
 \alpha_r \beta_s  \e\(Kr^{-1}/s\) \frac{1}{R} \sum_{u=0}^{R-1}\sum_{t=1}^{R_s}
  \e(u(t - r)/R) \\
 & =  \frac{1}{R} \sum_{u=0}^{R-1}  \sum_{s \sim S}  \sum_{\substack{r =1\\\gcd(r,s) = 1}}^{R}
 \alpha_r    \e(-ur/R) \beta_s  \e\(Kr^{-1}/s\)  \sum_{t=1}^{R_s}  \e(ut/R) .
\end{align*}
Using that 
$$
\sum_{t=1}^{R_s}  \e(ut/R) 
  \ll \frac{R}{\min\{u, R- u\} + 1},
$$
see~\cite[Equation~(8.6)]{IwKow},  we derive 
\begin{align*}
\sB_K(S; \cR,  \balpha,  \bbeta)  \ll  \frac{1}{R} &
\sum_{u=0}^{R-1}  \frac{R}{\min\{u, R- u\} + 1}\\
&\quad  \times  \abs{ \sum_{s \sim S}  \sum_{\substack{r =1\\\gcd(r,s) = 1}}^{R}
 \alpha_r    \e(-ur/R) \beta_s  \e\(Kr^{-1}/s\) } .
 \end{align*}

 It remains to observe that  for each $u = 0, \ldots, R-1$ the bound of Lemma~\ref{lem:DFI} applies to the inner sum and implies 
\begin{align*}
& \abs{\sB_K(S; \cR,  \balpha,  \bbeta)}     \\
 & \qquad \quad \le  AB (RS)^{1/2} \((R + S)^{1/2} + \(1 + \frac{K}{RS}\)^{1/2} \min\{R,S\}\)N^{o(1)}.
 \end{align*}
Recalling~\eqref{eq:R N S}, this now simplifies as
\begin{align*}
\abs{\sB_K(S; \cR,  \balpha,  \bbeta)}    &\le  AB (RS)^{1/2} \(S^{1/2} +  \(1 + \frac{K}{RS}\)^{1/2}R\)N^{o(1)}\\
  & =  AB (RS)^{1/2} \(S^{1/2} +  R+ K^{1/2} S^{-1/2} R^{1/2}\)N^{o(1)}, 
 \end{align*}
 which concludes the proof. 
\end{proof}

\begin{remark} 
Instead of using Lemma~\ref{lem:DFI}, that is, essentially~\cite[Theorem~1]{DFI},  one can also derive a version of 
 Lemma~\ref{lem:DFI-Gen} from~\cite[Theorem~2]{DFI}, 
or from a stronger result due to  Bettin and  Chandee~\cite[Theorem~1]{BettChan}. However these bounds do not seem to improve our main result. 
\end{remark}

\section{Proof of Theorem~\ref{thm: S(N)}}

As we have noticed in Section~\ref{sec:prelim}, it is only enough to estimate  $T_{11}(N)$, as we borrow the bounds 
on  $T_{12}(N)$ and  $T_{2}(N)$ from~\cite{BaMa}. 
Furthermore, we see from~\eqref{eq:T11-psi} and~\eqref{eq:EN} that it is enough to estimate $U(N)$
given by~\eqref{eq:UN}.

We note that it is important to observe that the sum defining $\psi_H(u)$ in Lemma~\ref{lem:Vaal Approx} 
does not contain the term with $h=0$, while the sum on the right hand side of~\eqref{eq:Vaal Ined}
does. 
Hence, for any integer $H\ge 1$,  by  Lemma~\ref{lem:Vaal Approx} we have
\begin{align*}
 U(N)  &  \ll  H^{-1}  \sum_{s \ge \sqrt{N} }
  \sum_{\substack{r =1\\\gcd(r,s) = 1}}^{R_s} r  \\
  & \qquad +  \sum_{1\leq \abs{h} \leq H} \frac{1}{ h}\abs{  \sum_{s \ge \sqrt{N} }
  \sum_{\substack{r =1\\\gcd(r,s) = 1}}^{R_s} r   \e\(hNr^{-1}/s\) },\\
&  \qquad  \qquad  +   \frac{1}{ H}  \sum_{1\leq \abs{h} \leq H}\abs{   \sum_{s \ge \sqrt{N} }\sum_{\substack{r =1\\\gcd(r,s) = 1}}^{R_s}
 r   \e\(hNr^{-1}/s\) }\\
&  \ll  H^{-1}  \sum_{s \ge \sqrt{N} } R_s^2 +  \sum_{1\leq \abs{h} \leq H} \frac{1}{ h}\abs{  \sum_{s \ge \sqrt{N} }  \sum_{\substack{r =1\\\gcd(r,s) = 1}}^{R_s} r   \e\(hNr^{-1}/s\) }\\
&  \ll  H^{-1} N^{3/2} +  \sum_{1\leq \abs{h} \leq H} \frac{1}{ h}\abs{  \sum_{s \ge \sqrt{N} }  \sum_{\substack{r =1\\\gcd(r,s) = 1}}^{R_s} r   \e\(hNr^{-1}/s\) }.  
 \end{align*}
 
 Note that $R_s\ge 1$ implies $s \ll N$. 
Therefore,  partitioning the corresponding summation over $s$ into dyadic intervals, 
 we see that there is some integer $S$ with 
$$
 N^{1/2} \ll  S \ll N
 $$
and  such that 
 \begin{equation}
\label{eq:VUW}
U(N) \ll H^{-1}  N^{3/2} + V(N,S)  \log N,
\end{equation}
where 
$$
 V(N,S)  =   \sum_{1\leq \abs{h} \leq H} \frac{1}{ h}\abs{  \sum_{s \sim S} \sum_{\substack{r =1\\\gcd(r,s) = 1}}^{R_s}
 r   \e\(hNr^{-1}/s\) }.
$$
 
 Now, if $S \le H^{1/5} N^{3/5}$ then we use the bound~\eqref{eq:Bound-Weil} and easily derive
 \begin{equation}
\label{eq: Small S}
  V(N,S)  \le  N^{1+o(1)}  S^{1/2} \le H^{1/10} N^{13/10+ o(1)}.
\end{equation}
 
 On the other hand, for $S >  H^{1/5} N^{3/5}$, Lemma~\ref{lem:DFI-Gen} (used with $A\ll N/S$ and $B \ll 1$), after recalling that $R \ll N/S$, 
 implies the same bound:
\begin{align*}
  \sum_{s \sim S}  \sum_{\substack{r =1\\\gcd(r,s) = 1}}^{R_s}
 & r   \e\(hNr^{-1}/s\)\\
  & \le 
  (N/S)   N^{1/2+o(1)}  \(S^{1/2} +  NS^{-1}  +  h^{1/2} N S^{-1} \)\\
   & \le 
  (N/S)   N^{1/2+o(1)}  \(S^{1/2} +    h^{1/2} N S^{-1} \).
 \end{align*}
Therefore, recalling that  $S > H^{1/5} N^{3/5}$, we obtain 
\begin{align*}
  V(N,S) & \le (N/S) N^{1/2+o(1)}  \(S^{1/2} +    H^{1/2} N S^{-1} \)\\
  & =  N^{3/2+o(1)} S^{-1/2} +  H^{1/2}  N^{5/2+o(1)} S^{-2}\\
  & \le H^{-1/10} N^{6/5+o(1)}  +  H^{1/10} N^{13/10+o(1)} \\
     & \le H^{1/10} N^{13/10+o(1)} .
   \end{align*}
   
 Therefore, the bound~\eqref{eq: Small S} holds for any $S$. 
Substituting~\eqref{eq: Small S} in~\eqref{eq:VUW}  yields 
 $$
U(N) \ll H^{-1}  N^{3/2} +  H^{1/10} N^{13/10+o(1)}
$$
and choosing 
 $$
 H = \rf{N^{2/11}}
 $$
 to optimise the bound, we obtain 
 $$
U(N) \ll  N^{29/22+o(1)}.
$$

Finally, recalling~\eqref{eq:T11-psi} and~\eqref{eq:EN}, 
we derive~\eqref{eq:T11-new}  and conclude the proof.

\section*{Acknowledgements}

The author is very grateful to Michel Balazard, Bruno Martin and Jens Marklof  for encouragement 
and  very stimulating discussions and comments.  The author also would like to
thank the referee for a careful reading, which revealed 
some inaccuracies in the original version of the paper.

During the preparation of this work, the author was   supported in part by the  
Australian Research Council Grant DP230100534.


 \end{document}